\newtheorem{theorem}{Theorem}[section]
\newtheorem{proposition}[theorem]{Proposition}
\theoremstyle{definition}
\theoremstyle{remark}
\newtheorem{remark}[theorem]{Remark}
\numberwithin{equation}{section}
\def\re{\mathbb{R}}
\def\ol{\overline}
\def\|{\Vert}
\def\({\left(}
\def\){\right)}
\def\e{\varepsilon}
\def\a{\alpha}
\def\intb{\int_B}
\def\a{\alpha}
\begin{document}

\title{%
$W^{1,p}$ approximation of the Moser--Trudinger inequality
}

\author{Masato Hashizume\\
Graduate School of Advanced Science and Engineering, Hiroshima University, 
\\
Higashihiroshima, 739-8527, Japan
\\[5mm]
Norisuke Ioku
\\
Mathematical Institute, Tohoku University, 
\\
Aramaki 6-3, Sendai 980-8578, Japan
}

\date{}

\maketitle

\begin{abstract}
We propose a power type approximation of the Moser--Trudinger functional and show that its concentration level converges to the Carleson--Chang
limit.
\end{abstract}
\maketitle

\section{Introduction}
Let $N \geq 2$, $1< p<N$, $B$ be the unit ball in 
$\re^N$, 
and 
$W^{1,p}_0(B)$ be the completion with respect to the Sobolev norm 
$\|u\|_{W^{1,p}(B)}=(\|u\|_{L^p(B)}^p+\|\nabla u\|_{L^p(B)}^p)^{1/p}$
 of smooth compactly supported functions in $B$.
Then, the Sobolev inequality in $B$ 
states that,
there exists a constant $C>0$ such that 
\begin{equation}\label{eq:1.1a}
\|u\|_{L^{p^*}(B)}
 \le 
 C
 \|\nabla u\|_{L^p(B)}
\end{equation}
for every $u\in W^{1,p}_0(B)$,
where $p^* = Np/(N - p)$ is the critical Sobolev exponent.

In the borderline situation where $p = N$, the inequality \eqref{eq:1.1a} is known to hold when $p^*$ is replaced by any number greater than or equal to $1$. However, a stronger result, proved by Trudinger~\cite{Trudinger} (see also \cite{Pohozaev,Yudovich}) is available. 
That is,
\begin{equation}\label{TM}
\sup_{\substack{u \in W^{1,N}_0(B) \\ \|\nabla u\|_{L^N(B)} \leq 1}} \int_B e^{\a |u|^\frac{N}{N-1}} dx \leq C|B|
\end{equation}
for some constants $\alpha$ and $C$, depending only on $N$.
Moser \cite{Moser} sharpened the inequality~\eqref{TM} to
\begin{equation}
\label{originalTM}
\sup_{\substack{u \in W^{1,N}_0(B) \\ \|\nabla u\|_{L^N(B)} \leq 1}} \int_B e^{\a |u|^{\frac{N}{N-1}}} dx
\begin{cases}
\le C|B| \quad &{\rm if}\quad \a \leq \a_N, \\
= +\infty &{\rm if}\quad \a > \a_N,
\end{cases}
\end{equation}
where $\a_N = N \omega_{N-1}^{\frac{1}{N-1}}$ and $\omega_{N-1}$ is the surface measure of the unit sphere 
in $\re^N$.

The Trudinger inequality~\eqref{TM} 
is considered to be a limiting case of the Sobolev inequality 
in the framework of Orlicz spaces. 
After the contribution of Adams~\cite{Adams},
Cianchi~\cite{Cianchi} established the optimal extension of inequalities \eqref{eq:1.1a} and \eqref{TM} to the case where Lebesgue norms are replaced by any Orlicz norm. This extension coincides with 
\eqref{eq:1.1a} for $W^{1,p}_0(B)$ and \eqref{TM} for $W^{1,N}_0(B)$, respectively.
However, the latter is not obtained via a direct limiting procedure in the former as $p\to N$
even though the Sobolev inequality~\eqref{eq:1.1a} and its limiting case~\eqref{TM}
were unified in \cite{Cianchi}.

In this paper, we focus on this discontinuity and propose 
an equivalent form of the $L^{p^*}$ norm that converges to 
the Moser--Trudinger functional in ~\eqref{originalTM}
as $p\to N$.
To state our result, we define a function $F_p: \re \to \re_+$ for $p \in (1,N)$ by
\begin{equation}
\label{F_p}
\left\{
\begin{aligned}
F_p(s) 
&
:= \left[1 + \frac{N-p}{N(p-1)}\a_p | s |^{\frac{p}{p-1}}
\right]^{\frac{N(p-1)}{N-p}},
\\
\a_p 
&
:= 
\left(
\a_N^{\frac{N-1}{N}} |B|^{\frac{1}{p}-\frac{1}{N}}
\right)^{\frac{p}{p-1}}.
\end{aligned}
\right.
\end{equation}
\begin{proposition}\label{proposition:1}
Let $u$ be a smooth compactly supported function in $B$.
Then there holds
\[
c_1 
\( \|u\|_{L^{p^*}(B)}^{p^*} + |B| \)
\le
\int_B F_p(u)dx
\le
c_2 
\( \|u\|_{L^{p^*}(B)}^{p^*} + |B| \)
\]
for some $c_1,c_2>0$ depending on $p$, $N$, and $|B|$.
Furthermore, there holds
\[
\int_{B}F_p\bigl(u
\bigr) dx
\to
\int_B e^{\a_N |u|^{\frac{N}{N-1}}} dx\ \ (p\to N).
\]
\end{proposition}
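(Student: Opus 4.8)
\emph{Proof idea.} The plan is to obtain the limit $\int_B F_p(u)\,dx \to \int_B e^{\alpha_N|u|^{N/(N-1)}}\,dx$ from pointwise convergence of the integrands together with a majorant that is uniform in $p$, so that the dominated convergence theorem applies on the finite-measure set $B$.

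First I would record the pointwise limit. Setting $\varepsilon_p := \frac{N-p}{N(p-1)}$ and $z_p(s) := \alpha_p|s|^{p/(p-1)}$, so that $F_p(s) = (1+\varepsilon_p z_p(s))^{1/\varepsilon_p}$, one checks as $p \to N$ that $\varepsilon_p \to 0^+$, that $|B|^{1/p-1/N} \to 1$ and $p/(p-1) \to N/(N-1)$, hence $\alpha_p \to \alpha_N$ and $z_p(s) \to \alpha_N|s|^{N/(N-1)}$ for each fixed $s$. Since $\frac{1}{\tau}\log(1+\tau) \to 1$ as $\tau \to 0$, writing $\frac{1}{\varepsilon_p}\log(1+\varepsilon_p z_p(s)) = \frac{\log(1+\varepsilon_p z_p(s))}{\varepsilon_p z_p(s)}\,z_p(s)$ gives $F_p(s) \to e^{\alpha_N|s|^{N/(N-1)}}$ for every $s$ (the case $s=0$ being trivial), and in particular $F_p(u(x)) \to e^{\alpha_N|u(x)|^{N/(N-1)}}$ for every $x\in B$. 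This is just the classical $(1+x/n)^n\to e^x$ mechanism, with the mild twist that the coefficient $\alpha_p$ and the exponent $p/(p-1)$ also move with $p$.

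Next I would produce the $p$-uniform majorant. From the elementary inequality $\log(1+\tau)\le\tau$ for $\tau\ge0$, applied with $\tau=\varepsilon_p z_p(s)\ge 0$, one gets $0\le F_p(s)\le e^{\alpha_p|s|^{p/(p-1)}}$ for every $s\in\re$. Because $u$ is smooth and compactly supported, $M:=\|u\|_{L^\infty(B)}<\infty$, and because $\alpha_p\to\alpha_N$ and $p/(p-1)\to N/(N-1)$, there is $\delta>0$ such that $\alpha_p|u(x)|^{p/(p-1)}\le\alpha_p\max\{1,M\}^{p/(p-1)}\le C_M$ for all $x\in B$ and all $p\in(N-\delta,N)$, with $C_M$ independent of $p$ and $x$. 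Hence $0\le F_p(u(\cdot))\le e^{C_M}$ on $B$, and the constant $e^{C_M}$ is integrable on $B$ since $|B|<\infty$; dominated convergence then closes the argument. For completeness I would also note that the first (two-sided) display is purely algebraic: writing $c_p:=\frac{N-p}{N(p-1)}\alpha_p$ and $q_p:=\frac{N(p-1)}{N-p}$, one has $q_p\cdot\frac{p}{p-1}=p^*$, so from $\max\{1,\tau\}\le 1+\tau\le 2\max\{1,\tau\}$ with $\tau=c_p|s|^{p/(p-1)}$, raised to the power $q_p$, one sees that $F_p(s)$ is comparable to $1+c_p^{\,q_p}|s|^{p^*}$ up to constants depending only on $p,N,|B|$; integrating over $B$ yields the stated inequalities.

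The only point in the whole statement that requires any care is the uniformity of the majorant as $p\to N$, which the convexity bound $\log(1+\tau)\le\tau$ together with the boundedness of $u$ settles; everything else is routine, so I do not expect a genuine obstacle.
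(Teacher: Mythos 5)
Your proof is correct and follows essentially the route the paper intends: the paper disposes of Proposition~\ref{proposition:1} in one line by noting that $\lim_{s\to\infty}F_p(s)/s^{p^*}=c>0$ and $\lim_{s\to 0}F_p(s)=1$, which is exactly the comparability $F_p(s)\approx 1+|s|^{p^*}$ that you establish explicitly via $\max\{1,\tau\}\le 1+\tau\le 2\max\{1,\tau\}$ raised to the power $N(p-1)/(N-p)$. Your dominated-convergence argument for the limit as $p\to N$ (pointwise convergence of the integrands plus the uniform majorant $F_p(s)\le e^{\alpha_p|s|^{p/(p-1)}}\le e^{C_M}$ on the bounded range of $u$) correctly supplies the details the paper leaves implicit.
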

Proposition~\ref{proposition:1} clearly follows from
$\lim_{s\to \infty}F_p(s)/s^{p^*}=c$ for some constant $c>0$
and $\lim_{s\to 0}F_p(s)=1$.
Furthermore, as we will see, 
the proposed function $F_p$ 
yields new insight into the concentration level of the Moser--Trudinger functional in~\eqref{originalTM}.

The concentration level of the Moser--Trudinger functional~\eqref{originalTM} 
for $\a=\a_N$  
was first investigated 
by Carleson--Chang~\cite{CC}.
Let 
$B_{\e}$ 
be
the ball centered at the origin with radius $\e>0$
and
\[
G(u) := \int_B e^{\a_N |u|^{\frac{N}{N-1}}}dx.
\]
It is revealed in~\cite{CC} that
\begin{equation}
\label{CClimit}
\limsup_{n \to \infty}G(u_n) \leq |B|(1+e^{\sum_{k=1}^{N-1}\frac{1}{k}})
\end{equation}
if $\{u_n\}$ is a concentrating sequence, that is $\lim_{n \to \infty} \| \nabla u_n\|_{L^N(B_\e)} = 1$ holds for every $\e>0$.
The maximal limit in \eqref{CClimit} on concentrating sequences is called the Carleson--Chang limit.
Later, de Figueiredo--do \'O--Ruf \cite{FOR} constructed a concentrating sequence $\{y_n\}$ such that $\lim_{n \to \infty} G(y_n) = |B|(1+e^{\sum_{k=1}^{N-1}\frac{1}{k}})$, 
which means the value of the Carleson--Chang limit is the right hand side of \eqref{CClimit}.
It should 
be mentioned that
Carleson--Chang~\cite{CC}
considered \eqref{CClimit}
to prove
the existence of a function which attains the supremum in (\ref{originalTM}) for $\a=\a_N$.
Specifically, they
described a function $u^*$ such that $G(u^*) > |B|(1+e^{\sum_{k=1}^{N-1}\frac{1}{k}})$,
and combined this fact,
the concentration compactness argument, and \eqref{CClimit} 
to show that
all maximizing sequences of the supremum in \eqref{originalTM} with 
$\alpha=\alpha_N$ are
precompact.
This method has been extended to more general cases, as shown in the works of Struwe \cite{Struwe}, Flucher \cite{Flucher} and Li \cite{Li}.
See also \cite{Ruf,Ishiwata,IMNS,MM,Hashizume}
and references therein for other discussion of maximizing problems related to the Moser--Trudinger functional.

We define $\mathscr{B}_p$ and $X_p$ by
\begin{equation}
\label{B_p}
\mathscr{B}_p := \left\{ u \in W^{1,p}_{0,rad}(B) \ \middle|\ \|\nabla u\|_p \leq 1\right\},
\end{equation}
where $W^{1,p}_{0,rad}(B)$ denotes
the set of radially symmetric functions 
belonging to $W^{1,p}_0(B)$,
and
\begin{equation}
\label{X_p}
X_p := \left\{\{u_n\} \subset \mathscr{B}_p \ \middle|\ \lim_{n \to \infty} \|\nabla u_n \|_{L^p(B_\e)} = 1\ {\rm for \ any\ }\e>0 \right\}.
\end{equation}

The main result of the presented paper shows that 
the concentration level associated with $F_p$ converges to 
the Carleson--Chang limit.
\begin{theorem}
\label{thm1}
It holds that
\[
\sup_{\{u_n\} \in X_p}
\left(
\limsup_{n \to \infty} \int_B F_p(u_n)dx
\right)
\to |B|(1+e^{\sum_{k=1}^{N-1} \frac{1}{k}})\ \ (p\to N).
\]
\end{theorem}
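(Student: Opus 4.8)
The plan is to identify, for every $p\in(1,N)$ sufficiently close to $N$, the value of the left–hand side \emph{exactly}, and only then to let $p\to N$. Concretely I claim that
\[
\sup_{\{u_n\}\in X_p}\Big(\limsup_{n\to\infty}\int_B F_p(u_n)\,dx\Big)=|B|+c_p\,S_p^{-p^*/p},
\qquad c_p:=(a_p\alpha_p)^{1/a_p},\quad a_p:=\tfrac{N-p}{N(p-1)},
\]
where $S_p$ is the sharp constant in the Sobolev inequality $S_p\|u\|_{L^{p^*}(\re^N)}^p\le\|\nabla u\|_{L^p(\re^N)}^p$; the theorem then reduces to the asymptotic identity $c_p\,S_p^{-p^*/p}\to|B|\,e^{\sum_{k=1}^{N-1}1/k}$ as $p\to N$.

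The algebraic key is the identity $p'/a_p=p^*$ with $p'=p/(p-1)$, which gives the exact decomposition
\[
F_p(u)=1+c_p\,|u|^{p^*}+g_p(u),
\]
where, setting $x=a_p\alpha_p|u|^{p'}$, one has $F_p(u)=(1+x)^{1/a_p}$, $c_p|u|^{p^*}=x^{1/a_p}$, hence $g_p(u)=(1+x)^{1/a_p}-x^{1/a_p}-1$. For $p$ near $N$ we have $1/a_p=\tfrac{N(p-1)}{N-p}\ge 1$, so by the elementary inequality $(1+x)^q\ge 1+x^q$ ($q\ge1,\ x\ge0$) the remainder $g_p$ is nonnegative; and a two–sided binomial estimate (treating $x\le1$ and $x\ge1$ separately) yields $0\le g_p(s)\le C\,(|s|^{p'}+|s|^{p^*-p'})$ with $C=C(N,p)$, where both exponents $p'$ and $p^*-p'$ are strictly less than $p^*$ once $p$ is close to $N$.

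For the upper bound, take any $\{u_n\}\in X_p$. As $\|\nabla u_n\|_p\le1$ the sequence is bounded in $W^{1,p}_0(B)$; the concentration condition forces $\nabla u_n\to0$ in $L^p$ on $B\setminus B_\e$ for every $\e>0$, hence $u_n\rightharpoonup0$ in $W^{1,p}_0(B)$, and therefore $u_n\to0$ in $L^q(B)$ for every $q<p^*$ by Rellich--Kondrachov. Consequently $\int_B g_p(u_n)\,dx\le C(\|u_n\|_{L^{p'}}^{p'}+\|u_n\|_{L^{p^*-p'}}^{p^*-p'})\to0$. On the other hand the sharp Sobolev inequality on $W^{1,p}_0(B)$ (whose best constant equals $S_p$, is not attained, and follows from the one on $\re^N$ by extension by zero) gives $\|u_n\|_{L^{p^*}}^{p^*}\le S_p^{-p^*/p}$ for every $n$; the decomposition then yields $\limsup_n\int_B F_p(u_n)\,dx\le|B|+c_p S_p^{-p^*/p}$. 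For the matching lower bound I would use the truncated, renormalized Aubin--Talenti bubbles centered at the origin, $\tilde u_\e:=\eta\,U_\e/\|\nabla(\eta U_\e)\|_p$ with $\eta$ a fixed radial cutoff equal to $1$ near $0$: these are radial and lie in $\mathscr B_p$, they concentrate because $|\nabla U_\e|^p\rightharpoonup$ a Dirac mass at $0$, and $\|\tilde u_\e\|_{L^{p^*}}^{p^*}\to S_p^{-p^*/p}$ while $\int_B g_p(\tilde u_\e)\,dx\to0$ (again since $\tilde u_\e\rightharpoonup0$); hence $\int_B F_p(\tilde u_\e)\,dx\to|B|+c_p S_p^{-p^*/p}$, proving equality in the displayed claim.

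It remains to compute $\lim_{p\to N} c_p S_p^{-p^*/p}$. Using $1/a_p=\tfrac{N(p-1)}{N-p}$ and $p^*/p=\tfrac{N}{N-p}$, this amounts to showing $\tfrac{N}{N-p}\big[(p-1)\log(a_p\alpha_p)-\log S_p\big]-\log|B|\to\sum_{k=1}^{N-1}\tfrac1k$. I would insert the Bliss--Talenti value $S_p=\pi^{p/2}N\big(\tfrac{N-p}{p-1}\big)^{p-1}\big(\tfrac{\Gamma(N/p)\,\Gamma(1+N-N/p)}{\Gamma(N)\,\Gamma(1+N/2)}\big)^{p/N}$ together with the definitions of $\alpha_p$, of $\alpha_N=N\omega_{N-1}^{1/(N-1)}$ and $|B|=\omega_{N-1}/N$. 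A first cancellation removes the singular part: the terms $(p-1)\log(N-p)$ coming from $a_p$ and from $(\tfrac{N-p}{p-1})^{p-1}$ cancel, and one checks (using $\Gamma(1+N/2)=\tfrac N2\Gamma(N/2)$ and $\omega_{N-1}=2\pi^{N/2}/\Gamma(N/2)$) that the remaining $O(1)$ term vanishes at $p=N$; the limit is then the first–order coefficient in $(N-p)$, obtained by differentiating in $p$ at $p=N$, where the digamma contributions organize themselves via $\psi(N)+\gamma=\sum_{k=1}^{N-1}\tfrac1k$ to produce exactly $\sum_{k=1}^{N-1}\tfrac1k$. This final asymptotic analysis — tracking the two successive cancellations and the digamma terms — is the main technical point of the argument; everything else rests only on standard tools (Rellich--Kondrachov, the sharp Sobolev inequality and its extremals) combined with the algebraic structure of $F_p$ recorded above.
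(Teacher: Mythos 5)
Your proposal is correct and follows essentially the same route as the paper: the exact decomposition $F_p=1+c_p|u|^{p^*}+g_p(u)$ with $0\le g_p(s)\le C(|s|^{p'}+|s|^{p^*-p'})$ is the paper's pair of estimates \eqref{lowerest}--\eqref{upperest}, the identification of the supremum as $|B|+c_pS_p^{-p^*}$ via the sharp Sobolev constant and truncated Aubin--Talenti bubbles is Proposition~\ref{proposition:2.1}, and the digamma computation is Proposition~\ref{proposition:2.2}. The only (minor) deviation is that you handle $\int_B g_p(u_n)\,dx\to0$ by weak convergence to zero plus Rellich--Kondrachov rather than the radial $C^0$ embedding on annuli, which is precisely the variant the paper itself records in its first remark of Section~3.
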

\begin{remark}
It is important to notice the derivation of $F_p$.
In the proof of \eqref{CClimit}, 
the following inequality, termed Alvino's inequality~\cite{Alvino}
or the radial lemma, plays an essential role:
\begin{equation}
\label{Alvino_ineq}
|u(x)|
\le 
\alpha_N^{-\frac{N-1}{N}
}\(N\log \frac{1}{|x|}\)^{\frac{N-1}{N}}\|\nabla u\|_{L^N(B)}
\end{equation}
for every $x\in B\setminus\{0\}$ and $u\in W^{1,N}_{0,rad}(B)$.
It is easy to check that \eqref{Alvino_ineq} is equivalent to
\[
e^{\alpha_N\left(|u(x)|/\|\nabla u\|_{L^N(B)}\right)^{\frac{N}{N-1}}}\le \frac{1}{|x|^N}.
\]
Similarly, for the case of $1<p<N$, 
it holds that
\begin{equation}
\label{pradial}
|u(x)|\le \left(|B|^{\frac{1}{p}-\frac{1}{N}}\alpha_N^{\frac{N-1}{N}}\right)^{-1}
\left\{
N\frac{p-1}{N-p}\left(|x|^{-\frac{N-p}{p-1}}-1\right)
\right\}^{\frac{p-1}{p}}\|\nabla u\|_{L^p(B)}
\end{equation}
for every $x\in B\setminus\{0\}$ and $u\in W^{1,p}_{0,rad}(B)$.
The function $F_p$ is defined 
so that
\eqref{pradial}
is equivalent to 
\[
F_p\bigl(u(x)/\|\nabla u\|_{L^p(B)}\bigr)\le \frac{1}{|x|^N}.
\]
We prove \eqref{pradial} at the end of Section~2 for the convenience of readers. 
\end{remark}


\section{Proofs}

In this section we prove Theorem \ref{thm1}.
The best possible constant in \eqref{eq:1.1a}
obtained 
by Aubin~\cite{Au} and Talenti~\cite{Ta1} is
\begin{equation}\label{eq:1.1b}
S_p := \inf_{u \in W^{1,p}_0(B)} \frac{\ \|\nabla u\|_{L^p(B)}\ }{\|u\|_{L^{p^*}(B)}}
=
 \sqrt{\pi}N^{\frac1p}\left(\frac{N-p}{p-1}\right)^{\frac{p-1}{p}}
 \left[
  \frac{
    \Gamma\left(\frac{N}{p}\right)\Gamma\left(N+1-\frac{N}{p}\right)
   }
   {
   \Gamma(N)\Gamma\left(1+\frac{N}{2}\right)
   }
 \right]^{\frac1N}.
\end{equation}
This plays a crucial role.
First we define $\mathscr{B}_p$, $X_p$ by (\ref{B_p}), (\ref{X_p}), and then we set a constant $M_p$ by
\[
M_p := \sup_{\{u_n\} \in X_p} 
\left(\limsup_{n \to \infty} \int_B F_p(u_n)dx\right).
\]
With this setting, we divide the rest of the proof into two steps.
\begin{proposition}\label{proposition:2.1}
If $p>2N/(N+1)$ then
\begin{equation*}
\label{M_p}
M_p = |B| + \left[\frac{N-p}{N(p-1)}\a_p\right]^{\frac{N(p-1)}{N-p}} 
S_{p}^{-p^*}.
\end{equation*}
\end{proposition}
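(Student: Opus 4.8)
\textbf{Proof plan for Proposition~\ref{proposition:2.1}.}
The plan is to reduce the computation of the concentration level $M_p$ to a scalar optimization problem, exploiting the key fact from the Remark that $F_p$ was designed so that $F_p(u(x)/\|\nabla u\|_{L^p(B)})\le |x|^{-N}$ holds pointwise for radial Sobolev functions. First I would fix a sequence $\{u_n\}\in X_p$ and split the integral $\int_B F_p(u_n)\,dx$ over a small ball $B_\rho$ and its complement $B\setminus B_\rho$. On the outer region $B\setminus B_\rho$, the concentration hypothesis $\|\nabla u_n\|_{L^p(B_\e)}\to 1$ for all $\e>0$ forces $\|\nabla u_n\|_{L^p(B\setminus B_\rho)}\to 0$, hence (by the radial Sobolev embedding away from the origin, or by \eqref{pradial} applied on an annulus) $u_n\to 0$ uniformly on $B\setminus B_\rho$, so $\int_{B\setminus B_\rho}F_p(u_n)\,dx\to |B\setminus B_\rho|$ because $F_p(0)=1$; letting $\rho\to 0$ this contributes exactly $|B|$.

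For the inner region the pointwise bound $F_p(u_n(x)/\|\nabla u_n\|_{L^p(B)})\le |x|^{-N}$ is not integrable near $0$, so the naive estimate fails; instead I would use the scaling structure of $F_p$. Writing $t:=\|u_n\|_{\infty}$ (the radial maximum, attained at the origin) one has, by \eqref{pradial} with $x$ chosen so that the right-hand side equals $t$, a control of the form $\e_n^{N}\le (\text{explicit function of }t)$, quantifying how fast the "mass'' can concentrate. Combining this with the homogeneity $F_p(s)\sim [\,(N-p)/(N(p-1))\,\a_p\,]^{N(p-1)/(N-p)}|s|^{p^*}$ as $s\to\infty$ and $F_p(s)\to 1$ as $s\to 0$ (Proposition~\ref{proposition:1}), the inner contribution is bounded above by $[\,(N-p)/(N(p-1))\,\a_p\,]^{N(p-1)/(N-p)}$ times the optimal constant $S_p^{-p^*}$ in the Sobolev inequality \eqref{eq:1.1b}: indeed $\limsup_n\int_{B_\rho}F_p(u_n)\,dx\le c_p\,\limsup_n\|u_n\|_{L^{p^*}(B)}^{p^*}\le c_p\,S_p^{-p^*}$ since $\|\nabla u_n\|_p\le 1$, where $c_p$ is that constant. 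This gives $M_p\le |B|+c_p S_p^{-p^*}$.

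For the matching lower bound I would exhibit an explicit concentrating sequence. The natural candidate is the sequence of (rescaled, truncated) Aubin--Talenti extremals for the Sobolev quotient \eqref{eq:1.1b}, say $v_n(x)=\phi(x)\,U_n(x)$ where $U_n$ are the standard bubbles concentrating at the origin and $\phi$ a radial cutoff, normalized so that $\|\nabla v_n\|_p=1$; one checks $\{v_n\}\in X_p$. Because $F_p$ agrees with a constant multiple of $|s|^{p^*}$ to leading order and the bubbles saturate the Sobolev inequality, $\int_{B_\rho}F_p(v_n)\,dx\to c_p S_p^{-p^*}$, while $\int_{B\setminus B_\rho}F_p(v_n)\,dx\to |B\setminus B_\rho|\to|B|$; adding these and taking $\rho\to0$ yields $M_p\ge |B|+c_p S_p^{-p^*}$, completing the proof. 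The restriction $p>2N/(N+1)$, equivalently $p^*>2$, is exactly what is needed so that the lower-order terms in $F_p(v_n)$ (of order $|s|^{p/(p-1)}$ and below, integrated against the bubble profile) do not blow up and genuinely vanish in the limit; checking this integrability is the main technical obstacle, and it is where the hypothesis on $p$ enters decisively. The other delicate point is making the two-sided estimate on the inner region sharp — ensuring the concentration inequality derived from \eqref{pradial} loses nothing in the limit $p\to N$ is handled by the careful choice of normalization built into the definition of $\a_p$.
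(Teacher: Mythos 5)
Your overall strategy is the same as the paper's: compare $F_p(s)$ with $1+c_p|s|^{p^*}$ where $c_p=[\frac{N-p}{N(p-1)}\a_p]^{N(p-1)/(N-p)}$, show the remainder is negligible along concentrating sequences, get the upper bound from the Sobolev constant $S_p$ and the lower bound from normalized Aubin--Talenti bubbles (your cutoff $\phi U_n$ versus the paper's truncation $U(x/\e_n)-U(1/\e_n)$ is immaterial). But the central quantitative step is missing. The asymptotics $F_p(s)\sim c_p|s|^{p^*}$ as $s\to\infty$ and $F_p(s)\to1$ as $s\to0$ do \emph{not} yield the integral bound you assert on $B_\rho$; you need a pointwise inequality valid for all $s$. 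The paper gets one from the elementary bounds $a^\gamma+b^\gamma\le(a+b)^\gamma\le a^\gamma+b^\gamma+\gamma2^{\gamma-1}(ab^{\gamma-1}+a^{\gamma-1}b)$ with $\gamma:=N(p-1)/(N-p)$, giving $1+c_p|s|^{p^*}\le F_p(s)\le 1+c_p|s|^{p^*}+H(s)$ with $H(s)=C_1|s|^{p/(p-1)}+C_2|s|^{p^*-p/(p-1)}$, and then proves $\int_B H(u_n)\,dx\to0$ for every $\{u_n\}\in X_p$ by combining the radial embedding of $W^{1,p}_{rad}$ into $C^0$ away from the origin with H\"older and Sobolev on the small ball. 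That vanishing of the subcritical remainder is the real content of the proof; you name it as ``the main technical obstacle'' but never carry it out, and your detour through \eqref{pradial} (controlling $\e_n^N$ by $\|u_n\|_\infty$) plays no role in closing it.

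Two further corrections. First, $p>2N/(N+1)$ is not equivalent to $p^*>2$ (that would be $p>2N/(N+2)$); it is exactly the condition $\gamma=N(p-1)/(N-p)>1$, which is what legitimizes the two-sided elementary inequality above and makes both exponents in $H$ lie strictly between $0$ and $p^*$. It is not an integrability condition on the bubble profile: the lower-bound construction works for all $1<p<N$, and the paper's lower estimate never invokes the hypothesis. Second, in your upper bound you write $\limsup_n\int_{B_\rho}F_p(u_n)\,dx\le c_p\limsup_n\|u_n\|_{L^{p^*}}^{p^*}$, which cannot hold as stated since $F_p\ge1$ forces the left side to be at least $|B_\rho|$; the correct statement carries the additive $|B_\rho|$ (harmless as $\rho\to0$, and in fact the paper dispenses with the inner/outer splitting entirely by integrating the pointwise inequality over all of $B$). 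With these repairs your plan becomes the paper's proof.
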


\begin{proposition}\label{proposition:2.2}
It holds that
\[
\lim_{p \to N} M_p = |B|(1+e^{\sum_{k=1}^{N-1} \frac{1}{k}}).
\]
\end{proposition}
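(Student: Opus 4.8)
\noindent\textbf{Proof proposal for Proposition~\ref{proposition:2.2}.}
Since $2N/(N+1)<N$, Proposition~\ref{proposition:2.1} is available for all $p$ sufficiently close to $N$, so we may write
\[
M_p=|B|+\Lambda_p,\qquad
\Lambda_p:=\left[\frac{N-p}{N(p-1)}\a_p\right]^{\frac{N(p-1)}{N-p}}S_p^{-p^*},
\]
and the task reduces to showing $\Lambda_p\to|B|\,e^{H_{N-1}}$ as $p\to N$, where $H_{N-1}:=\sum_{k=1}^{N-1}\frac1k$. The plan is to pass to logarithms, insert the closed forms \eqref{F_p} for $\a_p$ and \eqref{eq:1.1b} for $S_p$, and track the (numerous) cancellations. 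Writing $q:=\frac{N(p-1)}{N-p}$ and expanding $\log\Lambda_p=q\log\!\big(\tfrac{N-p}{N(p-1)}\a_p\big)-p^*\log S_p$ produces terms such as $q\log(N-p)$ and $-p^*\tfrac{p-1}{p}\log(N-p)$ that each diverge as $p\to N$; the whole difficulty is to show that these divergences cancel and to identify the finite remainder.

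The first step is the algebraic identity $q=p^*\cdot\frac{p-1}{p}$, which I would use to eliminate every occurrence of $\log(N-p)$ and of $\log(p-1)$. The leftover elementary terms involve only $\log N$, $\log\pi$, $\log|B|$ and $\log\a_N$: substituting $\a_p=\big(\a_N^{(N-1)/N}|B|^{1/p-1/N}\big)^{p/(p-1)}$ yields the clean identity $q\log\a_p=\frac{p(N-1)}{N-p}\log\a_N+\log|B|$, and then feeding in $\a_N=N\omega_{N-1}^{1/(N-1)}$ together with the identities $|B|=\omega_{N-1}/N$ and $\log|B|=\tfrac N2\log\pi-\log\Gamma(1+\tfrac N2)$ makes all of the $\log N$, $\log\pi$ and $\log\omega_{N-1}$ contributions collapse. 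After this bookkeeping one should be left with
\[
\log\Lambda_p=\log|B|+\frac{p^*}{N}\Big[\log\Gamma(N)-\log\Gamma\!\big(\tfrac Np\big)-\log\Gamma\!\big(N+1-\tfrac Np\big)\Big].
\]

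It remains to evaluate the bracket, which is an $\infty\cdot0$ indeterminacy: $p^*/N\to\infty$, while $\Gamma(N/p)\to\Gamma(1)$ and $\Gamma(N+1-N/p)\to\Gamma(N)$, so the bracket tends to $0$. Here I would set $t:=\tfrac Np-1=\tfrac{N-p}{p}\to0^+$ and use the first-order expansions
\[
\log\Gamma(1+t)=\psi(1)\,t+O(t^2),\qquad
\log\Gamma(N-t)=\log\Gamma(N)-\psi(N)\,t+O(t^2),
\]
so that the bracket equals $\big(\psi(N)-\psi(1)\big)\,t+O(t^2)$. With $\psi(1)=-\gamma$ and $\psi(N)=-\gamma+H_{N-1}$ this gives $\psi(N)-\psi(1)=H_{N-1}$, hence bracket $=H_{N-1}\,t+O(t^2)$; and since $\frac{p^*}{N}\,t=\frac{p}{N-p}\cdot\frac{N-p}{p}=1$, we get $\frac{p^*}{N}\cdot(\text{bracket})=H_{N-1}+\frac{p^*}{N}O(t^2)=H_{N-1}+O(t)\to H_{N-1}$. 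Therefore $\log\Lambda_p\to\log|B|+H_{N-1}$, i.e.\ $\Lambda_p\to|B|\,e^{H_{N-1}}$ and $M_p\to|B|(1+e^{H_{N-1}})$.

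The main obstacle is organizational rather than conceptual: one must carefully verify that all the divergent pieces cancel, which rests on the four identities $q=p^*(p-1)/p$, $|B|=\omega_{N-1}/N$, $\a_N=N\omega_{N-1}^{1/(N-1)}$, and $\log|B|=\tfrac N2\log\pi-\log\Gamma(1+\tfrac N2)$; and one has to make sure the remainder $\tfrac{p^*}{N}\,O(t^2)$ genuinely tends to $0$, which holds because $t=(N-p)/p$ while $p^*/N=p/(N-p)$, so that $\tfrac{p^*}{N}\,O(t^2)=O(t)$.
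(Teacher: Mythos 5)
Your proposal is correct and follows essentially the same route as the paper: both reduce $M_p$ to $|B|+|B|\bigl[\Gamma(N)/(\Gamma(\tfrac Np)\Gamma(N+1-\tfrac Np))\bigr]^{p/(N-p)}$ (note $p^*/N=p/(N-p)$, so your logarithmic identity is exactly the paper's), substitute $t=(N-p)/p$, and evaluate the resulting $0/0$ limit as $\psi(N)-\psi(1)=\sum_{k=1}^{N-1}\frac1k$ via the digamma function. The only cosmetic difference is that the paper phrases the last step as a difference quotient for the derivative of $-\log(\Gamma(t+1)\Gamma(N-t))$ at $t=0$, whereas you use a first-order Taylor expansion with explicit $O(t^2)$ control; these are the same computation.
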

Theorem~\ref{thm1} follows from Propositions~\ref{proposition:2.1} and \ref{proposition:2.2}.

\begin{proof}[Proof of Proposition~\ref{proposition:2.1}]
For any positive constants $a$, $b$ and $\gamma > 1$, it holds that
\begin{equation*}
a^\gamma + b^\gamma \leq (a + b)^\gamma \leq a^\gamma + b^\gamma + \gamma 2^{\gamma-1} \(ab^{\gamma-1} + a^{\gamma-1}b\).
\end{equation*}
The second inequality follows from the positivity of the function
\begin{equation*}
f_c(d)
:=
c^{\gamma} + d^{\gamma} + \gamma 2^{\gamma - 1}(cd^{\gamma-1} + c^{\gamma-1}d)
- (c + d)^{\gamma}
\end{equation*}
for fixed positive constant $c$ and any $d \in (0, c]$.
Thus, for $p > 2N/(N+1)$ we have that
\begin{equation}
\label{lowerest}
1+ \left[ \frac{N-p}{N(p-1)}\a_p\right]^{\frac{N(p-1)}{N-p}} 
|s|^{p^*} \leq F_p(s)
\end{equation}
and
\begin{equation}
\label{upperest}
F_p(s) \leq 1+ \left[ \frac{N-p}{N(p-1)}\a_p\right]^{\frac{N(p-1)}{N-p}} 
 |s|^{p^*} + H(s),
\end{equation}
where $H(s) = C_1 |s|^{\frac{p}{p-1}} + C_2 |s|^{p^*-\frac{p}{p-1}}$ with positive constants $C_1, C_2$.

We take any sequence $\{u_n\} \in X_p$.
We prove that
\begin{equation}
\label{Hzero}
\int_B H(u_n)dx = o(1)
\end{equation}
as $n \to \infty$.
To this end, we set
\[
\tau_1 := \int_B |u_n|^{\frac{p}{p-1}}dx, \quad \tau_2 := \int_B |u_n|^{p^*-\frac{p}{p-1}}d
x \quad {\rm and} \quad \kappa_{n, \e} = u_n|_{\partial B_\e}.
\]
Recall that the embedding 
\begin{equation}\label{eq:2.4a}
W^{1,p}_{rad}(B \setminus \ol B_\e) \hookrightarrow C^0\left(\ol {B \setminus B_\e}\right)
\end{equation}
holds for every $\e>0$,
and 
for every $q \in [1,p^*]$ there is a constant $S_q$ such that
\begin{equation}
\label{mixedbdSobolev}
S_q \| u \|_{L^{q}(B \setminus \ol B_\e)} \leq \| \nabla u \|_{L^p (B \setminus \ol B_\e)}
\end{equation}
for every $u \in W^{1,p}_{rad}\left(B \setminus \overline{B_\e}\right)$
with $u=0$ on $\partial B$.
By the definitions of $\mathscr{B}_p$ and $X_p$, we obtain $\| \nabla u_n \|_{L^p(B \setminus \ol B_\e)} = o(1)$, and then $\| u_n \|_{L^p(B \setminus \ol B_\e)} \to 0$ as $n \to \infty$ for any $\e>0$ by (\ref{mixedbdSobolev}).
It holds from the embedding~\eqref{eq:2.4a}
that
$\kappa_{n,\e} \to 0$ as $n \to \infty$ for any $\e>0$.
Using this fact and (\ref{mixedbdSobolev}) again, we observe that
\begin{eqnarray*}
\tau_1 &=& \int_{B_\e} |u_n|^{\frac{p}{p-1}}dx + \int_{B \setminus B_\e} |u_n|^{\frac{p}{p-1}}dx \\
&\leq& \( \| u_n - \kappa_{n,\e} \|_{L^{\frac{p}{p-1}}(B_\e)} + \| \kappa_{n,\e}\|_{L^{\frac{p}{p-1}}(B_\e)} \)^{\frac{p}{p-1}} + \int_{B \setminus B_\e} |u_n|^{\frac{p}{p-1}}dx \\
&=& \( \|u_n - \kappa_{n,\e} \|_{L^{p^*}(B_\e)}
|B_\e|^{\frac{(p-1)p^*-p}{(p-1)p^*}
} + o_n(1) \)^{\frac{p}{p-1}} + o_n(1)\\
&\leq& \( S_p^{-1}\|\nabla u_n\|_{L^p(B_\e)} |B_\e|^{\frac{(p-1)p^*-p}{(p-1)p^*}
} + o_n(1) \)^{\frac{p}{p-1}} + o_n(1) \\
&=& o_\e(1) +o_n(1),
\end{eqnarray*}
where $o_n(1) \to 0$ as $n \to \infty$ and $o_\e(1) \to 0$ as $\e \to 0$.
Letting $\e \to 0$ after $n \to \infty$, we obtain $\tau_1 = o(1)$ as $n \to \infty$.
Similarly, we deduce that $\tau_2 = o(1)$ as $n\to \infty$.
Thus, we obtain (\ref{Hzero}).

Applying (\ref{Hzero}) to (\ref{upperest}) with the aid of the Sobolev inequality,
we have
\begin{equation}
\begin{aligned}
\label{upperest2}
\int_B F_p(u_n) dx
&\leq \int_B \left\{ 1+ \left[ \frac{N-p}{N(p-1)}\a_p
\right]^{\frac{N(p-1)}{N-p}} 
 |u_n|^{p^*} + H(u_n)\right\} dx 
 \\
&\leq |B| + \left[ \frac{N-p}{N(p-1)}\a_p\right]^{\frac{N(p-1)}{N-p}} 
S_p^{-p^*} + o(1). 
\end{aligned}
\end{equation}
This proves the upper estimate of $M_p$.

It remains to prove the lower estimate of $M_p$.
We consider the Aubin--Talenti function
\[
U(x) = (1+|x|^{\frac{p}{p-1}})^{-\frac{N-p}{p}}
\]
and 
the modified Aubin--Talenti function
\begin{equation}\label{eq:Wn}
W_{n}(x) = K_n \left[ {\e_n}^{-\frac{N-p}{p}} \( U(x/{\e_n}) - U(1/{\e_n}) \) \right]
\end{equation}
for $\e_n \to 0$ as $n \to \infty$,
where 
$K_n:=1/\|\nabla U\|_{L^p(B_{1/\e_n})}$
so that
$\| \nabla W_n \|_{L^p(B)}=1$.
It is easy to see that $\{W_n\} \in X_p$.
Let us prove 
\begin{equation}\label{eq:2.8b}
\int_B |W_n|^{p^*} dx = S_p^{-p^*} + o(1)
\end{equation}
as $n\to \infty$.
First we 
recall that 
$S_p^{-1}=\frac{\|U\|_{L^{p^*}(\mathbb{R}^N)}}{\|\nabla U\|_{L^p(\mathbb{R}^N)}}$.
It follows from the definition of $W_n$ that
\[
\int_B |W_n|^{p^*} dx 
\le
\int_B \left|K_n \e_n^{-\frac{N-p}{p}}U\left(\frac{x}{\e_n}\right)\right|^{p^*}dx
=
\frac{\int_{B_{{1}/{\e_n}}} \left|U(y)\right|^{p^*}dy}{\|\nabla U\|^{p^*}_{L^p(B_{{1}/{\e_n}})}}
=
S_p^{-p^*} + o(1).
\]
On the other hand, the Taylor expansion yields
\[
\begin{aligned}
\int_B |W_n|^{p^*} dx
&
\ge 
\int_B \left|K_n \e_n^{-\frac{N-p}{p}}U\left(\frac{x}{\e_n}\right)\right|^{p^*}dx
\\
&
\qquad
-p^*
K_n^{p^*}
\e_n^{-\frac{N-p}{p}}U\left(\frac{1}{\e_n}\right)
\int_B
\left| \e_n^{-\frac{N-p}{p}}U\left(\frac{x}{\e_n}\right)\right|^{p^*-1}dx.
\end{aligned}
\]
Since 
$K_n\to \frac{1}{\|\nabla U\|_{L^p(\mathbb{R}^N)}}$,
$\e_n^{-\frac{N-p}{p}}U\left(\frac{1}{\e_n}\right)
\to 0$
as $n\to \infty$,
and
\[
\int_B
\left| \e_n^{-\frac{N-p}{p}}U\left(\frac{x}{\e_n}\right)\right|^{p^*-1}dx
\le |B|^{\frac{1}{p^*}}
\left(\int_{\mathbb{R}^N} |U(y)|^{p^*}dy\right)^{1-\frac{1}{p^*}}
\]
for all $n\in \mathbb{N}$, 
we have 
$\int_B |W_n|^{p^*} dx\ge S_p^{-p^*}+o(1).
$
Hence \eqref{eq:2.8b} holds.
Combining \eqref{eq:2.8b} with (\ref{lowerest}), we have
\begin{equation}
\begin{aligned}
\label{lowerest2}
&|B| + \left[ \frac{N-p}{N(p-1)}\a_p\right]^{\frac{N(p-1)}{N-p}} 
S_p^{-p^*} 
\\
&
= 
\lim_{n \to \infty} 
\int_B 
\left\{ 
1+ 
 \left[ 
   \frac{N-p}{N(p-1)}\a_p
 \right]^{\frac{N(p-1)}{N-p}} 
|W_n|^{p^*} \right\} dx \\
&\leq \sup_{\{u_n\} \in X_p} \limsup_{n \to \infty} \int_B F_p(u_n) dx. 
\end{aligned}
\end{equation}
This proves the lower estimate~\eqref{lowerest2}.
Consequently (\ref{upperest2}) and (\ref{lowerest2}) yield 
Proposition~\ref{proposition:2.1}.
\end{proof}

\begin{proof}[Proof of Proposition~\ref{proposition:2.2}]
It follows from \eqref{F_p}, \eqref{eq:1.1b}, and Proposition~\ref{proposition:2.1}
that
\begin{eqnarray*}
M_p 
&=&
|B| + \left[\frac{N-p}{N(p-1)}
\left(
|B|^{\frac{1}{p^*}} \frac{\sqrt \pi N}{\Gamma\(1 + \frac{N}{2}\)^{\frac{1}{N}}}
\right)^{\frac{p}{p-1}}
\right]^{\frac{N(p-1)}{N-p}} 
\\
&\ &
\times 
\left\{ \sqrt{\pi}N^{\frac1p}\left(\frac{N-p}{p-1}\right)^{\frac{p-1}{p}}
 \left[
  \frac{
    \Gamma\left(\frac{N}{p}\right)\Gamma\left(N+1-\frac{N}{p}\right)
   }
   {
   \Gamma(N)\Gamma\left(1+\frac{N}{2}\right)
   }
 \right]^{\frac1N}
\right\}^{-p^*} 
\nonumber \\
&=& 
|B| + |B| \left[ \frac{\Gamma(N)}{\Gamma \(\frac{N}{p}\) \Gamma \(N + 1 - \frac{N}{p}\)} \right]^{\frac{p}{N-p}}.
\nonumber
\end{eqnarray*}
We observe that
\begin{equation*}
\left[ \frac{\Gamma(N)}{\Gamma \(\frac{N}{p}\) \Gamma \(N + 1 - \frac{N}{p}\)} \right]^{\frac{p}{N-p}}
= \exp \left\{ \log \left[ \frac{\Gamma(N)}{\Gamma \(\frac{N}{p}\) \Gamma \(N + 1 - \frac{N}{p}\)} \right]^{\frac{p}{N-p}}\right\}.
\end{equation*}
Put $t = (N-p)/p$, then it is easy to see that
\[
\begin{aligned}
\lim_{p\to N}\log \left[ \frac{\Gamma(N)}{\Gamma \(\frac{N}{p}\) \Gamma \(N + 1 - \frac{N}{p}\)} \right]^{\frac{p}{N-p}}
&
=
\lim_{t\to 0}\frac{\log \left[ \Gamma(N) \right]-\log \left[ \Gamma \(t+1\)\Gamma \(N  - t\) \right]}{t}
\\
&
=
\Biggl.\frac{d}{dt}\Bigl[-\log \big(\Gamma(t+1)\Gamma(N-t)\bigr)\Bigr]\Biggr|_{t=0}
\\
&
=
\frac{d}{dt}\log (\Gamma(t)) \bigg|_{t=N} - \frac{d}{dt}\log (\Gamma(t)) \bigg|_{t=1}.
\end{aligned}
\]
Here 
$\frac{d}{dt}\log (\Gamma(t))=\frac{\Gamma'(t)}{\Gamma(t)}$
is called the digamma function.
It is known that the digamma function is written by (see for example Section 13.2 in \cite{AWH})
\[
\frac{d}{dt}\log (\Gamma(t)) = -\gamma + \sum_{j=1}^\infty \( \frac{1}{j} - \frac{1}{t-1+j}\),
\]
where $\gamma$ denotes Euler's constant.
Thus, 
it holds
\[
\lim_{p\to N}\log \left[ \frac{\Gamma(N)}{\Gamma \(\frac{N}{p}\) \Gamma \(N + 1 - \frac{N}{p}\)} \right]^{\frac{p}{N-p}} = \sum_{k=1}^{N-1}\frac{1}{k}.
\]
This completes the proof of Proposition~\ref{proposition:2.2}.
\end{proof}

We give a proof of \eqref{pradial} for the convenience of readers. 
\begin{proof}[Proof of \eqref{pradial}]
Let $1<p<N$ and fix $u\in W^{1,p}_{0,rad}(B)$.
Then there exists $v:[0,1)\to \mathbb{R}$ such that
$u(x)=v(|x|)$. 
By the fundamental theorem of calculus and the H\"older inequality, we have
\[
\begin{aligned}
|v(r)|
\le 
\int_r^1 |v'(s)|ds
&
\le
\left(
\int_r^1 s^{N-1}|u'(s)|^pds
\right)^{\frac{1}{p}}
\left(\int_r^1s^{-\frac{N-1}{p-1}}ds\right)^{\frac{p-1}{p}}
\\
&
\le
\omega_{N-1}^{-\frac{1}{p}}\|\nabla u\|_{L^p(B)}
\left\{
\frac{p-1}{N-p}\left(r^{-\frac{N-p}{p-1}}-1\right)
\right\}^{\frac{p-1}{p}}.
\end{aligned}
\]
The conclusion follows from 
$\omega_{N-1}^{-\frac{1}{p}}
=
N^{\frac{p-1}{p}}
\left(|B|^{\frac{1}{p}-\frac{1}{N}}\alpha_N^{\frac{N-1}{N}}\right)^{-1}$.
\end{proof}

\section{Additional remarks}
In a final section, we state some remarks. 
\begin{remark}
Proposition~\ref{proposition:2.1}, and hence Theorem \ref{thm1}, holds when 
$\mathscr{B}_p$ and $X_p$ are replaced by the following 
general settings, without assuming 
radially symmetric conditions:
\begin{equation*}
\begin{aligned}
\mathscr{C}_p 
&
:= \left\{ u \in W^{1,p}_0(B) \ \middle|\ \|\nabla u\|_{L^p(B)} \leq 1\right\},
\\
\hat X_p 
&
:= \left\{\{u_n\} \subset \mathscr{C}_p \ \middle|\ u_n \rightharpoonup 0 {\rm \ weakly \ in\ }W^{1,p}_0(B) \right\}.
\end{aligned}
\end{equation*}
We give a sketch of the proof. Since $\{W_n\}$ constructed in \eqref{eq:Wn} belongs to $\hat{X_p}$, 
the lower estimate of Proposition~\ref{proposition:2.1} holds by the same argument as in the proof for $X_p$. 
For the upper estimate, it is enough to prove $\intb H(u_n) dx = o(1)$ for any $\{u_n\} \in \hat X_p$, where $H$ is defined in \eqref{upperest}.
This computation is a direct consequence of the definition of $H$ and the compactness of subcritical Sobolev embeddings.
Hence, Proposition \ref{proposition:2.1} holds for $\hat X_p$.

\end{remark}

\begin{remark}
Several 
other 
inequalities for $W^{1,N}$ functions 
were 
derived from $W^{1,p}$ cases
by 
the direct limiting procedure as 
$p\to N$.
Indeed, $W^{1,p}$ approximation of the Alvino inequality~\eqref{Alvino_ineq}
and the Hardy inequality in the half space 
was obtained in \cite{Ioku} and \cite{ST}, respectively.
\end{remark}

\begin{remark}
It is worth to notice that
the function $F_p$ in Theorem~\ref{thm1} 
can be written by
the $q$-exponential function 
\[
\exp_q(r)
:=[1+(1-q)r]^{\frac{1}{1-q}},
\quad 
\text{for $q>0$, $q\neq 1$, and $r>0$,}
\]
which was
originally introduced by
Tsallis~\cite{Tsallis}
to study nonextensive statistics.
Under this notation, it is easy to check 
that
\begin{equation*}
F_p(u)=\exp_{1-\frac{N-p}{N(p-1)}}\left(\alpha_p|u|^{\frac{p}{p-1}}\right).
\end{equation*}
Since 
$\lim_{q\to 1}\exp_q r=e^r$, 
our functional defined by $F_p$ is regarded as a $q$-exponential approximation of the Moser--Trudinger functional.
\end{remark}

\begin{remark}
We recall that there exists a huge literature on the
whole space version of Moser--Trudinger inequality.
It would be impossible to list all the contributions, hence we like to single out some pioneering works in $\mathbb{R}^2$.
The whole space version of the Trudinger inequality is firstly obtained by Ogawa~\cite{Ogawa}.
Later, Adachi--Tanaka~\cite{AT} sharpened Ogawa's result by obtaining the best exponent.
Ruf~\cite{Ruf} pointed out that the inhomogeneous normalization by $\|\cdot\|_{H^1(\mathbb{R}^2)}$ makes differences in the case $\alpha=\alpha_2=4\pi$.
Recently Cassani--Sani--Tarsi~\cite{CST} showed a surprising equivalence between Adachi--Tanaka's inequality and Ruf's inequality.
It would be interesting to consider similar results to Theorem~\ref{thm1} for these inequalities.
Moreover,
extensions of Theorem~\ref{thm1} to, higher order derivative cases, fractional derivative cases, weighted versions, and for more general domains,
are possible future works.
\end{remark}

\begin{remark}
By 
Lions \cite{Lions}, it has been proven that if a sequence $\{u_n\} \subset \mathscr{C}_N$ satisfies $u_n \rightharpoonup u_0$ weakly in $W^{1,N}_0(B)$ and
\begin{equation*}
\liminf_{n \to \infty}\int_B e^{\a_N|u_n|^{\frac{N}{N-1}}}dx > \int_B e^{\a_N|u_0|^{\frac{N}{N-1}}}dx,
\end{equation*}
then 
$u_0=0$.
However, the situation 
in the case of
$p<N$ is different,
because
one can construct a sequence $\{u_n\} \subset \mathscr{C}_p$ 
such that
$u_n \rightharpoonup u_0 \not \equiv 0$ weakly in $W^{1,p}_0(B)$ 
and 
$\liminf_{n\to \infty}\int_B F_p(u_n) dx > \int_B F_p(u_0) dx$ as follows:
Let 
\begin{equation*}
T_p(s) = F_p(s) - \left[ \frac{N-p}{N(p-1)}\a_p\right]^{\frac{N(p-1)}{N-p}} 
|s|^{p^*}.
\end{equation*}
By (\ref{lowerest}) and (\ref{upperest}),
we observe that $1 \leq T_p(s) \leq 1 + H(s)$.
Applying a variant of the dominated convergence theorem, we have
\begin{equation*}
\int_B T_p(u_n) dx \to \int_B T_p(u_0) dx
\end{equation*}
for any $\{u_n\} \subset W^{1,p}_0 (B)$ with $u_n \rightharpoonup u_0$ weakly in $W^{1,p}_0(B)$.
Therefore, it suffices to identify a sequence $\{u_n\} \subset \mathscr{C}_p$
such that 
$u_n \rightharpoonup u_0 \not \equiv 0$ weakly in $W^{1,p}_0(B)$ 
and 
$\liminf_{n\to \infty}\int_B |u_n|^{p^*} dx > \int_B |u_0|^{p^*} dx$.

Take $\phi, \psi \in W^{1,p}_0(B)$ with 
$\|\nabla \phi \|_{L^p(B)}^p = 1/2, \|\nabla \psi \|_{L^p(B)}^p = 1/2$
and consider zero extension of $\psi$ outside of $B$.
Define a sequence by
\[
u_n(x) = C_n \(\phi(x) + n^{\frac{N-p}{p}}\psi (nx)\),
\]
where $C_n$ is taken such that 
$\| \nabla u_n \|_{L^p(B)}=1$.
Under the setting, 
it holds that
\[
\begin{aligned}
1 &= C_n^p 
\| \nabla \phi \|_{L^p(B \setminus B_{1/n})}^p 
+ 
\| \nabla u_n \|_{L^p(B_{1/n})}^p \\
&= C_n^p
\left\{ 
\frac{1}{2} 
+ 
 \left[
	\|\nabla n^{\frac{N-p}{p}} \psi (n \cdot)\|_{L^p(B_{1/n})} 
	+
	O\(\| \nabla \phi \|_{L^p(B_{1/n})}\)
 \right]^p
+o(1)
\right\} \\
&=
C_n^p
\(
\frac{1}{2} 
+ 
\|\nabla \psi \|_{L^p(B)}^p
+o(1)
\) 
=
C_n^p
\(1+ o(1)\).
\end{aligned}
\]
Thus, we see that
$C_n \to 1$ and $u_n \rightharpoonup \phi$ weakly in $W^{1,p}_0(B)$
as $n \to \infty$. Therefore, $\{u_n\}$ does not belong to $\hat{X_p}$.
Moreover, 
it follows from 
the Brezis--Lieb Lemma
\cite[Theorem 1]{BL}
that
\begin{eqnarray*}
\lim_{n \to \infty} \int_B |u_n|^{p^*} dx 
&=& 
\int_B |\phi|^{p^*} dx + \lim_{n \to \infty}\int_B \bigl|n^{\frac{N-p}{p}}\psi (n x) \bigl|^{p^*}dx \\
&=& 
\int_B |\phi|^{p^*} dx + \int_B |\psi|^{p^*}dx
>
\int_B |\phi|^{p^*} dx,
\end{eqnarray*}
hence 
the sequence $\{u_n\}$ satisfies the desired condition.
This fact makes difficult to prove the attainability of the optimal constant
$
\sup_{\|\nabla u\|_{L^p(B)}\le 1}\int_B F_p(u)dx.
$
\end{remark}

\begin{remark}
The optimal constant
$
\sup_{\|\nabla u\|_{L^p(B)}\le 1}\int_B F_p(u)dx
$
is lower semicontinuous as $p\to N$, namely there holds
\[
\liminf_{p\uparrow N}\left(\sup_{\|\nabla u\|_{L^p(B)}\le 1}\int_B F_p(u)dx \right)
\ge
\sup_{\|\nabla u\|_{L^N(B)}\le 1}\int_B e^{\alpha_N|u|^{\frac{N}{N-1}}}dx.
\]
Indeed,
it follows from
$\|\nabla u\|_{L^p(B)}\le |B|^{\frac{1}{p}-\frac{1}{N}}\|\nabla u\|_{L^N(B)}$
that
\[
\begin{aligned}
&
\sup_{\|\nabla u\|_{L^p(B)}
\le 
1
}\int_B F_p(u)dx
\\
&
=
\sup_{\|\nabla u\|_{L^p(B)}\le 
|B|^{\frac{1}{p}-\frac{1}{N}}
}
\int_B 
\left[1 + \frac{N-p}{N(p-1)}\a_N^{\frac{p(N-1)}{N(p-1)}} 
|u|
^{\frac{p}{p-1}}
\right]^{\frac{N(p-1)}{N-p}}
dx
\\
&
\ge
\sup_{\|\nabla u\|_{L^N(B)}\le 1
}
\int_B 
\left[1 + \frac{N-p}{N(p-1)}\a_N^{\frac{p(N-1)}{N(p-1)}} |u|^{\frac{p}{p-1}}
\right]^{\frac{N(p-1)}{N-p}}
dx
\\
&
\to 
\sup_{\|\nabla u\|_{L^N(B)}\le 1}\int_B e^{\alpha_N|u|^{\frac{N}{N-1}}}dx
\ \ 
(p\to N).
\end{aligned}
\]
The lower semicontinuity
gives us an alternative proof of Moser--Trudinger inequality~\eqref{originalTM}
if one can prove 
the uniform bound on $p$
of the optimal constant
$
\sup_{\|\nabla u\|_{L^p(B)}\le 1}\int_{B}F_p(u) dx
$.
The uniform bound and
the continuity 
of the optimal constant with respect to $p$
 remain open.
\end{remark}

\end{document}